\newtheorem{cor}[subsection]{Corollary}
\newtheorem{thm}[subsection]{Theorem}
\newtheorem{lem}[subsection]{Lemma}
\newtheorem{iprob}{Problem}
\newtheorem{ithm}[iprob]{Theorem}
\theoremstyle{definition}
\theoremstyle{definition}
\newtheorem{remark}[subsection]{Remark}
\newcommand{\Ind}{\operatorname{Ind}}
\newcommand{\into}{\hookrightarrow}
\newcommand{\cO}{\mathcal{O}}
\newcommand{\cG}{\mathcal{G}}
\newcommand{\Qp}{\Q_p}
\newcommand{\Fbar}{\overline{\F}}
\newcommand{\Qbar}{\overline{\Q}}
\newcommand{\Qpbar}{\Qbar_p}
\newcommand{\A}{\mathbf{A}}
\newcommand{\Fp}{\F_p}
\newcommand{\Fpbar}{\Fbar_p}
\newcommand{\psibar}{\overline{\psi}}
\def\OO{\mathrm{O}}
\def\Zhat{\widehat{\Z}}
\def\sss{\mathrm{ss}}
\def\cusp{\mathrm{cusp}}
\def\AAA{\mathcal{A}}
\def\numequation{\addtocounter{subsubsection}{1}\begin{equation}}
\def\numalign{\addtocounter{subsubsection}{1}\begin{align}}
\def\nummultline{\addtocounter{subsubsection}{1}\begin{multline}}
\def\anumequation{\addtocounter{subsection}{1}\begin{equation}}
\def\anummultline{\addtocounter{subsection}{1}\begin{multline}}
\DeclareMathOperator{\Gal}{Gal}
\DeclareMathOperator{\GL}{GL}
\DeclareMathOperator{\SL}{SL}
\DeclareMathOperator{\Sp}{Sp}
\DeclareMathOperator{\Sym}{Sym}
\DeclareMathOperator{\GO}{GO}
\DeclareMathOperator{\SO}{SO}
\DeclareMathOperator{\GSp}{GSp}
\def\Q{\mathbf{Q}}
\def\Z{\mathbf{Z}}
\def\C{\mathbf{C}}
\def\F{\mathbf{F}}
\def\R{\mathbf{R}}
\def\rbar{\overline{r}}
\def\sbar{\overline{s}}
\newcommand{\rhobar}{\overline{\rho}}
\newcommand{\varepsilonbar}{\overline{\varepsilon}}
\title{Cuspidal cohomology classes for~$\GL_n(\Z)$}
\author[G.~Boxer]{George Boxer}  \email{g.boxer@imperial.ac.uk} \address{Department of
  Mathematics, Imperial College London,
  London SW7 2AZ,~UK}
\author[F.~Calegari]{Frank Calegari}  \email{fcale@math.uchicago.edu} \address{The University of Chicago,
5734 S University Ave,
Chicago, IL 60637, USA}
\author[T.~Gee]{Toby Gee} \email{toby.gee@imperial.ac.uk} \address{Department of
  Mathematics, Imperial College London,
  London SW7 2AZ,~UK}
  \thanks{G.B.\ was supported by a Royal Society University Research Fellowship}
 \thanks{F.C. \ was supported in part by NSF Grant DMS-2001097.}
 \thanks{ T.G.\ was supported in part by an ERC Advanced grant. This
   project has received funding from the European Research Council
   (ERC) under the European Union’s Horizon 2020 research and
   innovation programme (grant agreement No. 884596)}
\begin{document}

\begin{abstract} We prove the existence of a
cuspidal automorphic representation~$\pi$ for~$\GL_{79}/\Q$ 
of level one and weight zero. We construct~$\pi$ using symmetric power functoriality
and a change of weight theorem, using Galois deformation theory.
As a corollary, we construct
the first known cuspidal cohomology classes in $H^*(\GL_{n}(\Z),\C)$
for any~$n > 1$.
\end{abstract}

\dedicatory{To Laurent Clozel, in admiration.}

\maketitle
\section{Introduction}

It is a well-known fact that there do not exist any cuspidal modular
forms of level~$N=1$ 
and weight~$k = 2$. From the Eichler--Shimura isomorphism,
this is equivalent to the vanishing of the cuspidal cohomology groups 
$$H^i_{\mathrm{cusp}}(\GL_2(\Z),\C)=0$$
for all~$i$ (particularly~$i=1$).
 It is natural to wonder what happens in higher rank.
 
 \begin{iprob}\label{isitopen}
Does there exist an~$n > 1$ such that
$H^i_{\mathrm{cusp}}(\GL_n(\Z),\C) \ne 0$ for some~$i$?
\end{iprob}

Higher rank analogues of the Eichler--Shimura isomorphism (see  Remark~\ref{rem:GLn-Matsushima})
imply 
that Problem~\ref{isitopen}  is equivalent to the existence of cuspidal automorphic representations~$\pi$ for~$\GL_n/\Q$ which have level one and weight zero.  Here level one means that $\pi_p$ is unramified for all primes $p$ and weight zero means that $\pi_\infty$ has the same infinitesimal character as the trivial representation.
The %
work of Fermigier and subsequently of Miller (\cite[Cor.\ 1]{Fermigier}
for $n\le 23$, \cite[Thm.\ 1.6]{Miller} for $n<27$) showed that 
there are no such~$\pi$ for
all~$1 < n < 27$; their methods are analytic and are related to the
Stark--Odlyzko positivity technique~\cite{Odlyzko} for lower bounds on
discriminants of number
fields. (In fact \cite{Fermigier} and \cite{Miller} 
formulate their main results in terms of the vanishing of the cuspidal cohomology
of~$\GL_n(\Z)$ and~$\SL_n(\Z)$ respectively; for the equivalence of
these statements with each other and with the non-existence of
such~$\pi$, see Remark~\ref{rem:GLn-Matsushima}.) %

Problem~\ref{isitopen} has subsequently been raised explicitly by a
number of people, including~\cite[\S2.5]{MR3642468}, \cite{KhareSoft},
and \cite[\S1.2]{ChevRenard}, where it is referred to as a
``well-known'' problem.  One motivation for this question, emphasized
by Khare, is that the vanishing of the
$H^i_{\mathrm{cusp}}(\GL_n(\Z),\C)$ for a given~$n$ could provide the
base case for an inductive proof of the analogue of Serre's conjecture
in dimension~$n$.  It was unclear to many people (including some of
the authors of this paper) whether it was reasonable to hope for this
vanishing for all~$n$, although in recent years the work of~Chenevier
and~ Ta\"{\i}bi on self-dual automorphic representations of level~$1$
(see e.g.\ the introduction to \cite{Chevtwo}) had made this seem
unlikely.  Another reason to expect an affirmative answer to
Problem~\ref{isitopen} is by comparison to the aforementioned
discriminant bounds of Odlyzko, which for a number field~$K/\Q$ give
positive constant lower bounds for the root
discriminant~$\delta_K =|\Delta_K|^{1/[K:\Q]}$ as the degree of~$K$
tends to infinity. One may ask whether there might exist a lower
bound which tended to infinity in~$[K:\Q]$. The answer to this
question is no by the Golod--Shafarevich construction; the existence
of class field towers gives an infinite sequence of fields of
increasing degree such that~$\delta_K$ is constant.

Our main theorem resolves Problem~\ref{isitopen} in the affirmative:
\begin{ithm}[Theorem~\ref{thm: p 107 results}, Corollary~\ref{cor:79}] \label{main} There exist cuspidal automorphic
representations for~$\GL_n/\Q$ of level one and weight zero
for~$n=79$, $n=105$, and~$n=106$.
In particular, $H^*_{\cusp}(\GL_n(\Z),\C) \ne 0$ for these~$n$.
\end{ithm}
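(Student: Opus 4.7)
The plan is to combine the symmetric power functoriality theorem for~$\GL_2$ (due to Newton--Thorne) with a change-of-weight result proved via Galois deformation theory. I would begin with a level-one cuspidal Hecke eigenform~$f$ of some weight~$k\geq 12$, the first candidate being $f=\Delta$. By Newton--Thorne, for every $m\geq 1$ the symmetric power $\Sym^{m-1} f$ is a cuspidal automorphic representation of~$\GL_m/\Q$ which automatically has level one; its $p$-adic Galois representations $\Sym^{m-1}\rho_{f,p}\colon G_{\Q}\to\GL_m(\Qpbar)$ are crystalline at~$p$ with Hodge--Tate weights $\{i(k-1):0\leq i\leq m-1\}$. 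The obstacle is that these weights are not the ``trivial'' weights $\{0,1,\ldots,m-1\}$, so $\Sym^{m-1} f$ is not of weight zero.

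To bridge the gap, I would then pick a prime~$p$ (and $f$, and $n=m$) such that the multisets $\{i(k-1)\bmod(p-1):0\leq i\leq n-1\}$ and $\{0,1,\ldots,n-1\}$ coincide. Under such a numerological alignment, the residual representation $\rhobar=\overline{\Sym^{n-1}\rho_{f,p}}$ has a Fontaine--Laffaille description compatible with the desired weight-zero Hodge--Tate type. The next step is to construct, via Galois deformation theory, a crystalline lift $\rho\colon G_{\Q}\to\GL_n(\Qpbar)$ of~$\rhobar$ which is unramified outside~$p$ and has Hodge--Tate weights $\{0,1,\ldots,n-1\}$, producing it by a Ramakrishna-style deformation argument at the Fontaine--Laffaille local condition, or equivalently by verifying the vanishing of the relevant obstruction group. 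Once such a lift is available, the residual automorphy of~$\rhobar$ — inherited from the automorphy of $\Sym^{n-1}\rho_{f,p}$ — combined with a modularity lifting theorem in the Fontaine--Laffaille range (a Taylor--Wiles patching argument for~$\GL_n$) upgrades~$\rho$ itself to automorphy. The resulting cuspidal automorphic representation~$\pi$ of~$\GL_n/\Q$ is unramified everywhere and has weight zero by construction, and by the higher-rank Matsushima formula cited in Remark~\ref{rem:GLn-Matsushima} this produces nonzero classes in $H^{*}_{\cusp}(\GL_n(\Z),\C)$.

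The chief obstacle lies in the change-of-weight step: one must verify that~$\rhobar$ has adequate (sufficiently large) image so that Taylor--Wiles patching can be applied to a symmetric power representation of such high rank, that it remains absolutely irreducible, and that a crystalline lift of the prescribed Hodge--Tate type really exists. The numerological constraint needed to balance these demands against the requirement that all weights fit within the Fontaine--Laffaille range~$[0,p-1)$ is precisely what forces the specific values $n\in\{79,105,106\}$: only for a small handful of triples $(k,p,n)$ do the Hodge--Tate weights of $\Sym^{n-1}\rho_{f,p}$ align modulo $p-1$ with $\{0,1,\ldots,n-1\}$, and only for those does the residual image and lift-existence analysis go through.
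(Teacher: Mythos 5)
Your overall architecture (Newton--Thorne symmetric powers plus a change-of-weight congruence proved by deformation theory, then the Matsushima-type identification) is the same as the paper's, but the two steps you treat as routine are exactly where your argument breaks. First, your numerological criterion --- that the Hodge--Tate weights $\{i(k-1)\}$ of $\Sym^{n-1}\rho_{f,p}$ match $\{0,1,\dots,n-1\}$ modulo $p-1$ --- is not sufficient, and it is not the real bottleneck. That congruence only controls the semisimplification of $\rhobar|_{G_{\Q_p}}$ on inertia. For a typical $(f,p)$ (e.g.\ $\Delta$ at almost every $p$) the form is ordinary and $\rhobar_{f,p}|_{G_{\Q_p}}$ is a \emph{nonsplit} extension of $\varepsilonbar^{1-k}$ by $1$; in that case no twist of $\Sym^{n-1}\rhobar_{f,p}|_{G_{\Q_p}}$ admits a crystalline lift with weights $0,\dots,n-1$ (for $n\le p$), so the desired level-one weight-zero $\pi$ cannot even be locally congruent to $\Sym^{n-1}f$ and the strategy dies at the outset. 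The paper's key input is the additional requirement that $\rhobar_{f,p}|_{G_{\Q_p}}$ be \emph{semisimple}: either split ordinary, which is a rare phenomenon certified by the existence of a companion form (Elkies' computation for the weight~$26$ form at $p=107$, giving $n=105,106$), or non-ordinary (the weight~$38$ form at $p=79$, giving $n=79$). This local semisimplicity, not the weight congruence, is what forces the specific values of $n$.

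Second, your lift-and-lift-again step is not viable as stated. A Ramakrishna-style construction introduces auxiliary ramification (destroying level one), and there is no unconditional vanishing of the relevant obstruction group here; the paper instead runs the Khare--Wintenberger method: the global crystalline (ordinary, resp.\ fixed-component) deformation ring $R$ has every component of dimension $\ge 1$ by Bellovin's result, and is finite over $\cO$ because, after base change to a CM field and passage to the $\cG_n$ formalism, an automorphy lifting theorem (Thorne, building on Geraghty's ordinary/Hida-family results) applies --- so the existence of the characteristic-zero crystalline point is itself a consequence of the automorphy machinery, not a separate elementary step. This crucially uses the symplectic/orthogonal (essentially self-dual) structure of the twisted symmetric power, which your proposal never invokes; without polarization there is no Taylor--Wiles patching available for $\GL_n/\Q$ in this rank. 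Finally, for $n=79=p$ your ``Fontaine--Laffaille range'' framing fails outright: the weights $0,\dots,p-1$ span an interval of length $p-1$, outside the Fontaine--Laffaille bound, and the dimension $n=p$ is excluded by the hypotheses of the standard change-of-weight theorems; the paper has to deploy the Harris tensor product trick (inducing characters from a degree $k-1$ CM extension and working with a $(k-1)p$-dimensional representation) to handle this case, an idea absent from your proposal.
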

Our argument works for other values of~$n$ 
(presumably infinitely
many,
 although we do not know how to prove this;
see Remark~\ref{rem: expect positive density irreducible}). 
In light of Theorem~\ref{main}, there is the obvious variation of
Problem~\ref{isitopen}:
 \begin{iprob}\label{itisopen}
 What is the smallest~$n > 1$ such that
$H^i_{\mathrm{cusp}}(\GL_n(\Z),\C) \ne 0$ for some~$i$?
\end{iprob}
We know from~\cite{Miller} and Theorem~\ref{main}
 that the answer satisfies~$27 \le n \le 79$. 
The work of Chenevier and Ta\"{\i}bi ~\cite{Chevtwo} suggests that the real 
answer is much closer to the lower bound than the upper bound.

\medskip

While the formulation of Problem~\ref{isitopen} makes no reference to
motives or Galois representations, according to standard conjectures in the Langlands program it is 
equivalent to the existence of irreducible rank~$n$ pure motives (with coefficients)
over~$\Q$ with everywhere good reduction and Hodge numbers
$0,1,\dots,n-1$, or to the existence of irreducible Galois representations $\rho:G_\Q\to\GL_n(\overline{\Q}_p)$ unramified away from $p$ and crystalline with Hodge--Tate weights $0,1,\ldots,n-1$ at $p$.  In fact, we will proceed by producing such Galois representations. %

Our approach to proving Theorem~\ref{main} is ultimately based on the
conjecture of Serre~\cite{MR885783} predicting the existence of congruences between
modular forms of different weights.  If $f$ is a cuspidal eigenform of level 1 and weight $k$ and the mod $p$ Galois representation $\rhobar_{f,p}:G_\Q\to\GL_2(\overline{\F}_p)$ is irreducible, then Serre predicts that there exists a modular form $g$ of weight 2 and level 1 with $\rhobar_{g,p}\simeq\rhobar_{f,p}$ if and only if $\rhobar_{f,p}|_{G_{\Q_p}}$ admits a crystalline lift with Hodge--Tate weights $0$ and~$1$.  Of course this cannot actually occur as no such $g$ exists!
The natural generalization of Serre's conjecture for larger $n$
predicts that if $\pi$ is a regular algebraic essentially self dual
cuspidal automorphic representation for $\GL_n/\Q$ of level 1 and
arbitrary weight, and the mod $p$ Galois representation
$\rhobar_{\pi,p}:G_\Q\to\GL_n(\overline{\F}_p)$ has ``large'' image,
then there exists an essentially self dual $\pi'$ of level 1 and weight 0 with $\rhobar_{\pi',p}\simeq\rhobar_{\pi,p}$ if and only if $\rhobar_{\pi,p}|_{G_{\Q_p}}$ admits a crystalline lift
which is either symplectic or orthogonal (depending on~$\pi$) up to twist, and 
with Hodge--Tate weights $0$, $1,\ldots,n-1$.  In many instances, these ``change of weight'' congruences may in fact be produced using automorphy lifting theorems and the Khare--Wintenberger method, as in \cite{gee051,gg,BLGGT}.

It remains to explain how we find the $\pi$ to which the above strategy can be applied.  For this, we need a supply of $\pi$ for which $\rhobar_{\pi,p}|_{G_{\Q_p}}$ may be readily understood.  Our idea is to take $\pi$ to be $\Sym^{n-1}f$ (up to twist) for $f$ a modular form of level~$1$; this symmetric power lift is now available thanks to the recent work of Newton--Thorne (see~\cite[Thm.\ A]{symmetric} for the version we use).  If $f$ is a cuspidal eigenform of level 1 and weight $k<p$, then typically $f$ will be ordinary at $p$ and the Galois representation $\rhobar_{f,p}|_{I_p}$ will be a nonsplit extension of $\varepsilonbar^{1-k}$ by $1$, where $\varepsilonbar$ denotes the mod $p$ cyclotomic character.  In this case no twist of $\Sym^{n-1}\rhobar_{f,p}|_{G_{\Q_p}}$ will have a crystalline lift of Hodge--Tate weights $0,\ldots,n-1$, at least for $n\leq p$.  On the other hand in the less typical situation that $\rhobar_f|_{G_{\Q_p}}$ is semisimple (or equivalently tamely ramified) we are sometimes able to succeed.  Here there are two possibilities, either $f$ is still ordinary at $p$ but the extension splits and $\rhobar_{f,p}|_{G_{\Q_p}}$ is a sum of two characters, or $f$ is non-ordinary at $p$ and $\rhobar_{f,p}|_{G_{\Q_p}}$ is irreducible.

As an illustration, if $f$ is ordinary at $p$, $\rhobar_{f,p}|_{G_{\Q_p}}$ splits, and $(k-1,p-1)=1$, then as $\varepsilonbar$ has order $p-1$, we find that
$$\Sym^{p-2}\rhobar_{f,p}|_{I_p}=\Sym^{p-2}(1\oplus\varepsilonbar^{1-k})=\bigoplus_{i=0}^{p-2}\varepsilonbar^{i(1-k)}=\bigoplus_{i=0}^{p-2}\varepsilonbar^i,$$
and hence $\Sym^{p-2}\rhobar_{f,p}|_{G_{\Q_p}}$ has a crystalline lift
of Hodge--Tate weights $0,1,\ldots,p-2$ which on inertia is simply a
sum of powers of the cyclotomic character.  This leads to the case
$n=106$ of theorem, taking $f$ to be the cusp form of level 1 and
weight 26 and $p=107$, while the case $n=105$ comes from a similar
consideration of $\Sym^{104}f$. Our ``change of weight'' theorem is
proved by extending the techniques introduced in ~\cite{gee051} and
developed further by Gee and Geraghty in~\cite{gg}, combining the
Khare--Wintenberger method with automorphy lifting theorems for Hida
families on unitary groups due to Geraghty~\cite{ger} (and refined by
Thorne~\cite{jack}). The case $n=79$ comes from considering
$\Sym^{78}f$ for a modular form $f$ which is non-ordinary at
$p=79$. Here the change of weight theorem is more involved and closer
to the arguments of~\cite{BLGGT}, using the Harris tensor product
trick.

  \begin{remark}
    \label{rem: expect to get multiples} While we expect that a
    cuspidal  automorphic representation of~$\GL_{n}$ of level one and
    weight zero should exist for all sufficiently large~$n$, we do not
    know how to prove this, even conditionally on Langlands
    functoriality. We can however give such a conditional argument for
    the existence for infinitely many~$n$. Indeed, if~$\pi$ is cuspidal automorphic of level one and weight zero
    for~$\GL_n/\Q$ with~$n$ odd, then for each $m\ge 1$, there is
    conjecturally a cuspidal automorphic representation of level one
    and weight zero for~$\GL_{nm}/\Q$. Indeed, for each~level one
    cuspidal eigenform~$f$ of weight~$n+1$ (such an~$f$ exists
    because~$n > 26$), the conjectural tensor
    product~$\pi \boxtimes \Sym^{m-1} f$ should be automorphic and
    cuspidal of level one and weight zero.
  \end{remark}

\begin{remark}
  \label{rem:GLn-Matsushima}Encouraged by one of the referees, we
now make some  clarifying remarks about 
the cuspidal cohomology groups
 of~$\GL_{n}(\Z)$ and~$\SL_n(\Z)$ and 
 their relationship with cuspidal automorphic
representations. A precise statement is as follows:
\begin{enumerate}
  \item If~$n$ is odd, then $H^*_{\cusp}(\GL_n(\Z),\C)=H^*_{\cusp}(\SL_n(\Z),\C)$. 
\item If~$n$ is even, then
  $\dim H^*_{\cusp}(\SL_n(\Z),\C)=2\dim H^*_{\cusp}(\GL_n(\Z),\C)$.
\item In either case, the spaces are nonzero if and only if there exists a cuspidal automorphic representation for~$\GL_n/\Q$
of level one and weight zero.
\end{enumerate}

 We now recall some definitions.
 There are two  locally symmetric spaces of level one associated to~$\GL_n/\Q$
  given by the quotients \[\GL_n(\Q) \backslash \GL_n(\A)
    /K_\infty\GL_n(\Zhat)\] where~$K_\infty$ is either~$\OO(n)$ or~$\SO(n)$. These can be identified with
    the quotient of the connected contractible symmetric space~$\GL_n(\R)/\OO(n)$ by~$\GL_n(\Z)$
    and~$\SL_n(\Z)$ respectively. When $n$ is odd, we have ~$\GL_n(\Z)
    =\SL_n(\Z) \times \{\pm 1\}$ and these two spaces are equal, but
    when~$n$ is even, one is a double cover of the other. The cuspidal
    cohomology groups $H_{\cusp}^*(\GL_n(\Z),\C)$ and
    $H_{\cusp}^*(\SL_n(\Z),\C)$ are defined to be the subspaces of the
    cohomology groups of the symmetric spaces of classes represented
    by harmonic cusp forms, and (see for example \cite[\S3.3]{HarderRag}
and \cite[Lem.\ 3.15]{MR1044819}) we have a commutative diagram as
    follows:
  \[\begin{tikzcd}
  \bigoplus_{\pi} H^{*}(\mathfrak{sl}_n,\OO(n); \pi_{\infty}) &H_{\cusp}^*(\GL_n(\Z),\C)  \\
 \bigoplus_{\pi} H^{*}(\mathfrak{sl}_n,\SO(n); \pi_{\infty}) & H_{\cusp}^*(\SL_n(\Z),\C),
	\arrow[hook, from=1-1, to=2-1]
	\arrow[hook, from=1-2, to=2-2]
        	\arrow["\simeq", from=1-1, to=1-2]
        	        \arrow["\simeq", from=2-1, to=2-2]
\end{tikzcd}\]
where the sums on the left hand side range over  the cuspidal automorphic representations~$\pi$ for~$\GL_n/\Q$
of level one and weight zero, and the cohomology is the relative Lie
algebra cohomology~\cite[I.5]{MR1721403}; it is immediate from the
definition that we have $H^{*}(\mathfrak{sl}_n,\OO(n);
\pi_{\infty})=H^{*}(\mathfrak{sl}_n,\SO(n);
\pi_{\infty})^{\OO(n)/\SO(n)}$.

 If~$n$ is even, there is a unique tempered %
 cohomological~$\pi_{\infty}$, and
  the~$(\mathfrak{gl}_n,\SO(n))$-cohomology is free as a $\C[\OO(n)/\SO(n)] \simeq \C[\Z/2\Z]$-module.
 (See~\cite[Lem.\ 3.14]{MR1044819} and its proof.) %
This results from the fact that the restriction of~$\pi_{\infty}$ to
the identity component~$\GL(\R)^{\circ}$ decomposes into the sum of
 two irreducible representations. In particular, there always
 exist~$\OO(n)/\SO(n)$-invariants; more precisely,
 $\dim H^*_{\cusp}(\SL_n(\Z),\C)=2\dim H^*_{\cusp}(\GL_n(\Z),\C)$,
 as claimed.

 If~$n$ is odd, the cohomologies
 of~$\GL_n(\Z)$ and~$\SL_n(\Z)$ agree as explained above.
  The interpretation for this in terms of the
 above diagram is as follows. There are now two tempered cohomological~$\pi_{\infty}$ which differ by a twist by the sign character of~$\GL_n(\R)$,
 and the action of~$\OO(n)/\SO(n)$
 on~$(\mathfrak{gl}_n,\SO(n))$-cohomology is either trivial or
 by~$-1$. (Again see~\cite[Lem.\ 3.14]{MR1044819}.)
 However, only the~$\pi_{\infty}$ with trivial central character can
 arise from a cuspidal automorphic representation of~$\GL_n$ with
 level one and weight zero, as the central character of such an
 automorphic representation is necessarily trivial. %
\end{remark}

\subsection{Acknowledgements}\label{subsec:acknowledgements}
We have been aware of Problem~\ref{isitopen} for some time, but it was
most recently brought to our attention at a lecture~\cite{CL} by
Ga\"{e}ten Chenevier at the conference \emph{Arithm\'{e}tique des
  formes automorphes} at Orsay in September, 2023, in honour of
Laurent Clozel's 70th birthday.  In light of this, together with the
obvious connections between the methods of this paper and Clozel's
work (Galois representations associated to self-dual automorphic
representations, modularity lifting theorems for self-dual Galois
representations, and symmetric power functoriality for modular forms,
to name but three), it is a pleasure to dedicate this paper to him.
We would also like to thank James Newton,  A.\ Raghuram, Will Sawin, 
Joachim Schwermer,
Olivier Ta\"{\i}bi and Jack
Thorne for helpful comments on earlier versions of this paper, and the
anonymous referees for several helpful comments and corrections.

\section{The ordinary case}\label{sec: ordinary}
We fix once and for all for each
prime~$p$ an isomorphism $\imath=\imath_p:\C\cong\Qpbar$, and we will
accordingly sometimes implicitly regard automorphic representations as
being defined over~$\Qpbar$, rather than~$\C$. In particularly we will
freely refer to ``the'' $p$-adic Galois representation associated to a
(regular algebraic) automorphic representation. We write~
$\rho_f:G_{\Q}\to\GL_2(\Qpbar)$ and $\rhobar_f:G_{\Q}\to\GL_2(\Fpbar)$
for the cohomologically normalized representations associated to an
eigenform~$f$. Let~$\varepsilon$ denote the~$p$-adic cyclotomic
character and~$\varepsilonbar$ its mod-$p$ reduction.

\begin{thm}\label{thm: congruence for f p and k}Let~$f$ be an
  eigenform of level~$\SL_2(\Z)$ and weight~$k\ge 2$, and
  let~$p>5$ 
  be a
  prime such that:
  \begin{enumerate}
  \item $\rhobar_f(G_{\Q})\supseteq \SL_2(\Fp)$.
  \item $(p-1,k-1)=1$.
  \item $f$ is ordinary at~$p$.
  \item $\rhobar_f|_{G_{\Q_p}}$ is semisimple.
  \end{enumerate}
  Then, for both~$n = p-1$ and~$n = p - 2$, there exists a self-dual
  cuspidal automorphic
  representation~$\pi$ for $\GL_{n}/\Q$ of level one and weight
  zero whose mod~$p$ Galois representation
  $\rhobar_{\pi}:G_{\Q}\to\GL_{n}(\Fpbar)$ is isomorphic to
  $$\Sym^{n-1}(\rhobar_{f} \otimes \varepsilonbar^{\frac{k-2}{2}})
  =   \varepsilonbar^{\frac{(n-1)(k-2)}{2}} \otimes  \Sym^{n-1} \rhobar_{f}.$$
  \end{thm}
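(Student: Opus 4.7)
The plan is to produce $\pi$ as a change-of-weight from the $(n-1)$-st symmetric power of $f$, as outlined in the introduction. First, I would apply the symmetric power functoriality of Newton--Thorne~\cite[Thm.\ A]{symmetric} to obtain a cuspidal automorphic representation of $\GL_n/\Q$ associated to $\Sym^{n-1}f$, of level one and some regular algebraic weight. Twisting by the appropriate Hecke character (corresponding to $\varepsilonbar^{(n-1)(k-2)/2}$) then yields a self-dual cuspidal automorphic representation $\Pi$ of $\GL_n/\Q$ of level one whose mod $p$ Galois representation is $\rhobar := \varepsilonbar^{(n-1)(k-2)/2} \otimes \Sym^{n-1}\rhobar_f$. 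The self-dual structure is symplectic when $n-1$ is odd and orthogonal when $n-1$ is even, arising from the natural bilinear form on $\Sym^{n-1}$ of a symplectic two-dimensional space.

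Next, I would analyze $\rhobar$ locally at $p$. Hypotheses (3) and (4) force $\rhobar_f|_{G_{\Q_p}}$ to be a direct sum of two characters whose inertia restrictions are $\varepsilonbar^{k-1}$ and the trivial character. Taking the symmetric power and combining with hypothesis (2), the computation already sketched in the introduction shows that $\rhobar|_{I_p}$ is a direct sum of $n$ distinct powers of $\varepsilonbar$ whose exponents (modulo $p-1$) are exactly $\{0, 1, \ldots, n-1\}$. Consequently $\rhobar|_{G_{\Q_p}}$ admits a crystalline lift with Hodge--Tate weights $0, 1, \ldots, n-1$, namely the sum of Teichm\"uller lifts of the diagonal characters twisted by appropriate powers of $\varepsilon$; this is the expected local behavior at $p$ of a weight zero automorphic representation for $\GL_n/\Q$.

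The heart of the argument is then the change-of-weight step: given the automorphy of $\rhobar$ (via $\Pi$) and the existence of the local crystalline lift of the desired Hodge--Tate weights, one should produce a self-dual cuspidal automorphic representation $\pi$ of $\GL_n/\Q$ of level one and weight zero with $\rhobar_{\pi} \cong \rhobar$. This would follow by extending the techniques of~\cite{gee051, gg}: one transfers via quadratic base change to a definite unitary group, uses Hida theory to build a family of ordinary automorphic lifts of $\rhobar$, and applies the Khare--Wintenberger method combined with the automorphy lifting theorems for Hida families of Geraghty~\cite{ger} (as refined by Thorne~\cite{jack}) to conclude that the classical point in the family corresponding to weight zero is genuinely automorphic. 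The main obstacle will be verifying the technical hypotheses of these automorphy lifting theorems, especially the adequacy (``large image'') condition on $\rhobar$; one expects this to follow from hypothesis (1) together with $n - 1 < p$, via group-theoretic analysis of the image of $\Sym^{n-1}$ on $\SL_2(\F_p)$. Cuspidality of $\pi$ is guaranteed by the irreducibility of $\rhobar$, which again follows from hypothesis (1) and $n \leq p - 1$, while level one and self-duality are preserved throughout the Khare--Wintenberger construction.
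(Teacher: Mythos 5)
Your proposal follows essentially the same route as the paper: residual (ordinary) automorphy via Newton--Thorne symmetric power functoriality, the local computation at $p$ showing that $\rhobar|_{G_{\Q_p}}$ is an unramified twist of a sum of distinct powers of $\varepsilonbar$ admitting an ordinary crystalline lift of Hodge--Tate weights $0,\dots,n-1$, and then a change of weight by the Khare--Wintenberger method combined with the ordinary automorphy lifting theorems of Geraghty and Thorne over a CM field, with adequacy coming from the image hypothesis. The paper carries this out via polarized $\cG_n$-valued deformation rings over an auxiliary imaginary quadratic field together with Bellovin's lower bound on the dimension of the global deformation ring (which requires checking $H^0(\Q,(\mathfrak{g}^{0}_n)^*(1))=0$), but these are exactly the technical hypotheses you flag rather than a different idea.
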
 
  \begin{proof}Let~$n = p-1$ or~$p-2$, and write~$G_n=\GSp_n$
  if~$n=p-1$ (equivalently, if $n$ is even), and $G_n=\GO_n$
  if~$n=p-2$ (equivalently, if~$n$ is odd). Let~$\F/\Fp$ be a finite extension such that
  $\rhobar_f(G_{\Q})\subseteq \GL_2(\F)$, and write
  \[\rhobar:=\Sym^{n-1}(\rhobar_{f} \otimes \varepsilonbar^{\frac{k-2}{2}})
  =   \varepsilonbar^{\frac{(n-1)(k-2)}{2}} \otimes  \Sym^{n-1}
  \rhobar_{f}  :G_{\Q}\to\GL_{n}(\F).\]
  Since~$\rho_f$ is symplectic with multiplier~$\varepsilon^{1-k}$,
the
twist~$\rhobar_{f} \otimes \varepsilon^{\frac{k-2}{2}}$ is symplectic
with multiplier~$\varepsilonbar^{-1}$, and so we can and do regard
$\rhobar$ as a representation $G_{\Q} \rightarrow G_{n}(\F)$ with
multiplier~$\varepsilonbar^{1-n}$.
   In particular, we have an isomorphism $ \rhobar \simeq  \rhobar^{\vee} \varepsilonbar^{1-n}.$

   By
  the hypotheses that~$f$ is ordinary at~$p$ and $\rhobar_{f} |_{G_{\Q_p}}$
  is semisimple, we can write
  \[      \rhobar_f|_{G_{\Qp}}\cong
    \psibar\oplus\psibar^{-1}\varepsilonbar^{1-k}  \] for some unramified
  character~$\psibar$, so that \[\rhobar|_{G_{\Qp}}\cong\bigoplus_{i=0}^{n-1}\psibar^{n-1-2i}\varepsilonbar^{(n-1)(k-2)/2-(k-1)i}.\]
Since~$(p-1,k-1)=1$, either $n=p-1$ or~$n=p-2$, and ~$\varepsilonbar$ has order~$(p-1)$,
it follows easily that there are 
  unramified characters ~$\psibar_i$ for $i=0,\dots,n-1$ such that
  \numequation
   \label{eqn:rhobar at p}
 \rhobar|_{G_{\Qp}}\cong
    \bigoplus_{i=0}^{n-1}\psibar_i\varepsilonbar^{-i}; \quad
    \psibar_{n-1-i}=\psibar_i^{-1}.
\end{equation}

  Since $\SL_2(\Fp)\subseteq \rhobar_f(G_{\Q}) $, the
  representation~$\rhobar$ is absolutely irreducible (see also
  Lemma~\ref{lem: adequacy}.) 
  Let~$E/\Qp$ be a finite extension with
  ring of integers~$\cO$ and residue field~$\F$. Recall that~$G_n=\GSp_n$
  if~$n$ is even, and $G_n=\GO_n$ if~$n$ is odd. 
  Write~$R$ for the complete local Noetherian $\cO$-algebra which is
  the universal deformation ring for~$G_n$-valued deformations
  of~$\rhobar$ which have multiplier~$\varepsilon^{1-n}$, are
  unramified outside~$p$, and whose restrictions to~$G_{\Qp}$ are
  crystalline and ordinary with Hodge--Tate weights
  $0,1,\dots,n-1$. 

  By~\cite[Prop.\ 4.2.6]{Bellovin}, every irreducible component of~
  $R$ has Krull dimension at least~$1$. (We are applying~\cite[Prop.\
  4.2.6]{Bellovin} with~$l$ equal to our~$p$, and the local deformation ring~$\overline{R}_p$
  being the union of those
  irreducible components of the corresponding crystalline deformation ring
  which are ordinary, as in~\cite[Lem.\ B.4]{MR4392460}; this is indeed a nonempty
  set of components because~\eqref{eqn:rhobar at p} shows that
  $\rhobar|_{G_{\Qp}}$ admits an
  ordinary crystalline lift, by lifting the
  characters~$\psibar_i$ to their Teichm\"uller lifts and
  the~$\varepsilonbar^{-i}$ to~$\varepsilon^{-i}$.  %
  In order to
  apply this proposition, we need to verify
  that~$H^0(\Q,(\mathfrak{g}^{0}_n)^*(1)) = 0$, where
  $\mathfrak{g}_n^{0}$ is respectively $\mathfrak{sp}_n$ or
  $\mathfrak{so}_n$ according to whether~$n$ is even or odd. To see
  this, it suffices to check that there are no invariants after taking
  the semi-simplification.
But~$(\mathfrak{g}^{0}_{n})^{*,\sss} \subset \mathfrak{gl}_{n}^{*,\sss} \simeq \mathfrak{gl}_{n}^{\sss}$ (such an inclusion
need not exist before taking semi-simplifications) and the latter module is isomorphic
to~$\bigoplus_{i=0}^{n-1} (\Sym^{2i} \rhobar_f)^{\sss} \otimes
\det(\rhobar_f)^{-i}\subset \bigoplus_{i=0}^{p-1} (\Sym^{2i} \rhobar_f)^{\sss} \otimes
\det(\rhobar_f)^{-i}$.
From the representation theory of~$\SL_2(\F)$, we see that 
the only characters occurring in each of these factors
occur with multiplicity at most one and only for~$i=0$, $2i=p+1$, and~$2i=2p-2$ (the second case only occurring
when~$\F = \F_p$).
 The characters that arise are in particular
self-dual, and so distinct from~$\varepsilonbar^{-1}$ since~$p > 3$. It
follows that~ $H^0(\Q,(\mathfrak{g}^{*,\sss}_n)^*(1)) \subseteq
H^0(\Q,\mathfrak{gl}_{n}^{\sss}(1)) = 0$, as required.
 The remaining hypotheses of
  ~\cite[Prop.\ 4.2.6]{Bellovin} hold because %
  the multiplier character $\varepsilon^{1-n}$ is odd/even precisely when $G_n$ is
  symplectic/orthogonal, and the
  Hodge--Tate weights $0,1,\dots,n-1$ are pairwise distinct.)

  Let~$F/\Q$ be an imaginary quadratic field in which~$p$ splits and which is disjoint from
  $(\Qbar)^{\ker\rhobar}(\zeta_{p})$. As in~\cite{cht} we let~$\cG_n$
 denote the semi-direct product of $\cG_n^0=\GL_n \times \GL_1$ by the group $\{1 , \jmath\}$
where 
\[ \jmath (g,a) \jmath^{-1}=(ag^{-t},a), \]
with multiplier character  $\nu:\cG_n \to \GL_1$  sending $(g,a)$ to
$a$ and $\jmath$ to $-1$. Following~\cite[\S 1.1]{BLGGT}, given  a homomorphism
$\psi:G_{\Q}\to G_n(R)$, we have an associated homomorphism
$r_{\psi}:G_{\Q}\to\cG_n(R)$, whose multiplier character is that 
of~$r$ multiplied by~$\delta_{F/\Q}^n$, where~$\delta_{F/\Q}$ is the
quadratic character corresponding to the
extension~$F/\Q$. Explicitly, if ~$A_n$ is the matrix defining the
pairing for the group~$G_n$ (so $A_n=1_n$ if~$n$ is odd and
$A_n=J_n$ if~$n$ is even, where~$J_n$ is the standard symplectic
form), then  $r_{\psi}$ can be defined as the
composite \[G_{\Q}\stackrel{\psi\times \operatorname{pr}}{\longrightarrow} G_n(R)\times
  G_{\Q}/G_F\to\cG_n(R),\] where $\operatorname{pr}$ is the projection
$G_{\Q}\to G_{\Q}/G_F\cong\{\pm 1\}$, and
the second map is the injection 
\numequation\label{eqn:Gn-to-cGn}G_n\times\{\pm 1\}\into\cG_n\end{equation} given by \begin{align*}
                                        r((g,1))& = (g,\nu(g)),\\
                                        r((g,-1))& = (g,\nu(g))\cdot(A_{n}^{-1},(-1)^{n+1}) \jmath.
\end{align*}
In particular
  we can apply this construction to~$\rhobar$, and we write $\rbar:=r_{\rhobar}:G_{\Q}\to\cG_n(\F)$.

  We let $R_F$ be the complete local Noetherian $\cO$-algebra which is
  the universal deformation ring for~$\cG_n$-valued deformations
  of~$\rbar$ which have multiplier~$\varepsilon^{1-n}\delta_{F/F^+}^n$, are
  unramified outside~$p$, and whose restrictions to the places above~$p$ are
  crystalline and ordinary with Hodge--Tate weights
  $0,1,\dots,n-1$. The association $\psi\mapsto r_{\psi}$ induces a
  homomorphism $R_F\to R$, which is easily checked to be a
  surjection. (Indeed,
  it suffices to show that the map $R_F\to R$ induces a surjection on reduced
  cotangent spaces. It in turn suffices to see that the induced map
  of Lie algebras from~\eqref{eqn:Gn-to-cGn} is a split injection of
  $G_{\Q}$-representations, or equivalently (since $p>2$) a split
  injection of $G_F$-representations, which is clear.)
  The polarized representation 
$(\overline{r}|_{G_F},\overline{\mu})$ is ordinarily automorphic by~\cite[Thm.\
A]{symmetric} applied to~$f$ (together with quadratic base change), and the group
$\overline{r}(G_{F(\zeta_p)})$ is adequate by Lemma~\ref{lem: adequacy}.
Applying~\cite[Thm.\ 10.1]{jack}, we see that $R_F$ is a finite $\cO$-algebra (see~\cite[Thm.\ 2.4.2]{BLGGT} for a restatement in the precise form
we use here; in the notation of that statement, we are taking $l=p$, $n=p-1$,
$S=\{p\}$, $\mu=\varepsilon^{1-n}$, $H_{\tau}=\{0,1,\dots,n-1\}$).
Thus~$R$ is a finite
$\cO$-algebra, and since it has dimension at least~$1$, it has
a~$\Qpbar$-valued point. The corresponding lift
$\rho:G_{\Q}\to G_n(\Qpbar)$ of~$\rhobar$ is unramified
outside~$p$, has multiplier~$\varepsilon^{1-n}$, and is crystalline
 and ordinary with Hodge--Tate weights
$0,1,\dots,n-1$.

The representation~$\rho$ is automorphic by~\cite[Thm.\
2.4.1]{BLGGT} (taking~$F=\Q$, $l=p$, $n=p-1$, $r=\rho$,
and~$\mu=\varepsilon^{1-n}\delta_{F/F^+}^n$). More
precisely, there is a self-dual regular algebraic cuspidal automorphic
representation~$\pi$ of~$\GL_{n}(\A_{\Q})$ whose corresponding
$p$-adic Galois representation $\rho_{\pi}:G_{\Q}\to\GL_{n}(\Qpbar)$ is
isomorphic to~$\rho$. By local-global compatibility (e.g.\
\cite[Thm.\ 2.1.1]{BLGGT}) 
we see that~$\pi$ has level one and weight
zero, as claimed. \end{proof}

\begin{lem}
  \label{lem: adequacy}Let $p>5$ 
  and let~$\rbar:G_{\Q}\to\GL_2(\Fpbar)$ be a
  representation with
  $\SL_2(\Fp)\subseteq\rbar(G_{\Q})$. Then for $p-2\le n\le p$, the
  group~$(\Sym^{n-1}\rbar)(G_{\Q(\zeta_p)})$ is
  adequate in the sense of~\cite[Defn.\ 2.20]{MR3598803}. %
\end{lem}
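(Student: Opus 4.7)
The plan is to reduce the adequacy of $H := (\Sym^{n-1}\rbar)(G_{\Q(\zeta_p)})$ to that of $H_0 := \Sym^{n-1}(\SL_2(\Fp)) \subseteq \GL_n(\Fpbar)$, and then to verify the adequacy axioms for $H_0$ directly. Since $\Q(\zeta_p)/\Q$ is cut out by $\varepsilonbar$, the subgroup $\rbar(G_{\Q(\zeta_p)})$ is normal in $\rbar(G_\Q)$ of index dividing $p-1$. For $p \ge 5$ the only normal subgroups of $\SL_2(\Fp)$ are $\{1\}$, $\{\pm I\}$, and $\SL_2(\Fp)$ (as $\mathrm{PSL}_2(\Fp)$ is simple of order $p(p^2-1)/2 > p-1$), which forces $\SL_2(\Fp)\cap\rbar(G_{\Q(\zeta_p)})=\SL_2(\Fp)$; applying $\Sym^{n-1}$ then yields $H_0 \trianglelefteq H$ with $|H/H_0|$ coprime to~$p$.

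Adequacy propagates from $H_0$ to $H$ for standard reasons: absolute irreducibility and the semisimple-span condition are inherited by any overgroup with the same representation, the no-$p$-power-quotient condition is preserved because $H/H_0$ has order prime to $p$, and $H^1$-vanishing transfers via inflation--restriction since $|H/H_0|$ is invertible in $\Fpbar$. It therefore suffices to verify adequacy for $H_0 = \Sym^{n-1}(\SL_2(\Fp))$ with $n-1 \in \{p-3, p-2, p-1\}$. Absolute irreducibility follows because these weights lie in the restricted range $[0,p-1]$; the no-$p$-power-quotient condition holds since $\SL_2(\Fp)^{\mathrm{ab}}$ is trivial for $p \ge 5$; and the semisimple-span condition can be verified by combining the images of the split and non-split maximal tori with their $H_0$-conjugates, whose eigenvalue patterns under $\Sym^{n-1}$ jointly fill out $M_n(\Fpbar)$.

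The main obstacle is the cohomology vanishing $H^1(\SL_2(\Fp), \operatorname{ad}^0 \Sym^{n-1}) = 0$. The case $n=p$ is clean: here $\Sym^{p-1}$ is the Steinberg representation, which is both projective and injective as an $\Fpbar[\SL_2(\Fp)]$-module, so $\operatorname{End}(\Sym^{p-1}) \cong (\Sym^{p-1})^\vee \otimes \Sym^{p-1}$ is projective with vanishing positive-degree cohomology, and the same then holds for its direct summand $\operatorname{ad}^0 \Sym^{p-1}$. For $n=p-1$ and $n=p-2$ the situation is more delicate because $\Sym^{p-2}$ and $\Sym^{p-3}$ are not projective; one analyzes $\operatorname{ad}^0 \Sym^{n-1}$ via its Jordan--H\"older filtration and invokes the standard computations of $H^1(\SL_2(\Fp), \Sym^m)$ from the modular representation theory of $\SL_2$, taking care with the borderline weights $m=p-3$ and $m=p-1$ where nonvanishing can occur. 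Alternatively, the adequacy of $\Sym^{n-1}(\SL_2(\Fp))$ in precisely this range is already recorded in the literature on adequate subgroups (e.g.\ in Guralnick--Herzig--Taylor--Thorne and its refinements), from which the lemma follows by direct citation.
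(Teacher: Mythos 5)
Your overall strategy---pass to a ``core'' subgroup of the image and verify or quote adequacy for symmetric powers of the standard representation---is the same as the paper's, but there is a genuine gap in your reduction step. You claim that $H_0=\Sym^{n-1}(\SL_2(\Fp))$ is normal of index prime to $p$ in $H=(\Sym^{n-1}\rbar)(G_{\Q(\zeta_p)})$. Your argument only shows $\SL_2(\Fp)\subseteq\rbar(G_{\Q(\zeta_p)})$ (which is fine; the paper gets this even more quickly from perfectness of $\SL_2(\Fp)$), but it gives no control on the index: the hypothesis bounds the image from below only, and $\rbar(G_{\Q(\zeta_p)})$ may well contain $\SL_2(\F_q)$ for $q=p^r$ with $r>1$, in which case $[H:H_0]$ is divisible by $p^{r-1}$ and your inflation--restriction/prime-order transfer arguments break down. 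This is exactly why the paper invokes Dickson's classification: there is some $q$ with $\SL_2(\F_q)\subseteq\rbar(G_{\Q(\zeta_p)})$ \emph{of prime-to-$p$ index}, and one then has to establish adequacy for $\Sym^{n-1}$ of the standard representation of $\SL_2(\F_q)$ (not just of $\SL_2(\Fp)$), before passing back to $H$ by the normal-subgroup reduction (the paper cites \cite[Rem.\ 6.1]{MR3626555} for this last step, which is the precise form of the ``adequacy propagates'' principle you assert).

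The second weakness is that the hard part is not actually proved. For $n=p$ your Steinberg/projectivity argument is fine, but for $n=p-1,p-2$ the vanishing of $H^1$ on $\operatorname{ad}^0\Sym^{n-1}$ is precisely the delicate point (composition factors such as $\Sym^{p-3}$ with nonvanishing $H^1$ do occur, and one needs the submodule structure, not just the Jordan--H\"older factors), and your semisimple-span claim for the tori is asserted rather than verified. Your fallback citation also does not apply as stated: the Guralnick--Herzig--Taylor--Thorne appendix requires roughly $p\ge 2(n+1)$, which fails badly here since $n\ge p-2$. The correct reference---and the one the paper uses---is Guralnick--Herzig--Tiep \cite{MR3626555}, whose Corollary 9.4 gives adequacy for absolutely irreducible $\SL_2(\F_q)$-modules of dimension greater than $(p+1)/2$ (this is where the hypothesis $p>5$, guaranteeing $n\ge p-2>(p+1)/2$, enters). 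So your proposal can be repaired, but as written the prime-to-$p$ index claim is false in general and the cohomological core of the lemma is neither proved nor correctly cited.
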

\begin{proof}
  Since~$\SL_2(\Fp)$ is perfect, we have
  $\SL_2(\Fp)\subseteq\rbar(G_{\Q(\zeta_p)})$, so it follows from Dickson's
  classification that for some power~$q$
  of~$p$, we have $\SL_2(\F_q)\subseteq \rbar(G_{\Q(\zeta_p)})$, and
  $p\nmid [\rbar(G_{\Q(\zeta_p)}):\SL_2(\F_q)]$. By~\cite[Rem.\ 6.1]{MR3626555},  it suffices to check that
  for~$U$ the standard $2$-dimensional $\Fpbar$-representation
  of~$G=\SL_2(\F_q)$, $V:=\Sym^{n-1}U$ is adequate. It is absolutely
  irreducible (because $n\le p$), and is therefore adequate
  by~\cite[Cor.\ 9.4]{MR3626555}, noting that since $p>5$ we have $n\ge p-2>(p+1)/2$.
\end{proof}

\subsection{The case $p=107$}\label{subsec: p 107} We now prove the cases~$n=105$ and~$n=106$ of~Theorem~\ref{main}
as an application of Theorem~\ref{thm: congruence for f p and k}. %

\begin{thm}
  \label{thm: p 107 results}There exist self-dual cuspidal automorphic
representations~$\pi$ for $\GL_n/\Q$ of level one and weight zero
for~$n=105$ and~$n=106$. In particular, $H^*_{\cusp}(\GL_n(\Z),\C) \ne
0$ for these~$n$.
   \end{thm}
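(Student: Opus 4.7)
The plan is to deduce both cases directly from Theorem~\ref{thm: congruence for f p and k}, applied to the prime $p=107$ and a suitably chosen level-one eigenform $f$. Taking $n=p-1=106$ and $n=p-2=105$ in that theorem produces the desired self-dual cuspidal automorphic representations, and the nonvanishing of $H^*_{\cusp}(\GL_n(\Z),\C)$ is then immediate from Remark~\ref{rem:GLn-Matsushima}. So the entire content of the proof is to exhibit a single $f$ on $\SL_2(\Z)$ satisfying the four hypotheses of Theorem~\ref{thm: congruence for f p and k} with $p=107$.

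Following the sketch in the introduction, I would take $f$ to be the unique (up to scalar) normalized cuspidal eigenform of weight $k=26$ and level one. Hypothesis~(2) is then trivial: $\gcd(p-1,k-1)=\gcd(106,25)=1$. Hypothesis~(1), the large-image condition $\rhobar_f(G_{\Q})\supseteq \SL_2(\F_p)$, reduces to checking that $107$ is not an exceptional prime for $f$; since the exceptional primes for level-one eigenforms are tightly classified via the Serre--Swinnerton-Dyer congruences, this is a short finite check. Hypothesis~(3), the ordinarity of $f$ at $107$, is a concrete computation: as $\dim S_{26}(\SL_2(\Z))=1$, one computes the Hecke eigenvalue $a_{107}(f)$ directly (for instance using the Eichler--Selberg trace formula) and verifies $a_{107}(f)\not\equiv 0\pmod{107}$.

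The essential obstacle is hypothesis~(4), the semisimplicity of $\rhobar_f|_{G_{\Q_{107}}}$. Because $f$ is ordinary at $p$, this representation is an extension of an unramified character by $\psibar^{-1}\varepsilonbar^{1-k}$ in the notation of the proof of Theorem~\ref{thm: congruence for f p and k}, and the required condition is that this extension split. By Gross's theory of companion forms, splitting is equivalent to the existence of a level-one mod-$107$ companion eigenform of weight $p+1-k=82$ whose mod-$p$ Hecke eigensystem matches an appropriate cyclotomic twist of that of $f$. My strategy would be to compute the mod-$107$ Hecke eigensystems occurring in the (small and explicitly computable) space $S_{82}(\SL_2(\Z))$ and exhibit the required match. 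There is no a priori reason for the ordinary filtration to split for this particular pair $(f,p)$, so this step is the heart of the argument; once it is confirmed, along with the easier verifications of (1) and (3), the theorem follows immediately from Theorem~\ref{thm: congruence for f p and k}.
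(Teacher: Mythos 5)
Your proposal follows essentially the same route as the paper: apply Theorem~\ref{thm: congruence for f p and k} with $p=107$ to the unique weight-$26$ level-one eigenform, with the only substantive point being the semisimplicity of $\rhobar_f|_{G_{\Q_{107}}}$, verified via Gross's companion-form criterion in weight $p+1-k=82$. The paper does not redo the computation you propose but instead cites the computation of Elkies recorded in \cite[\S17]{Gross} (together with \cite[Prop.\ 13.8(3)]{Gross}) for the existence of the weight-$82$ companion form, so your plan is correct and the fact you flag as the heart of the argument is indeed known.
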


\begin{proof}
  Let~$f = \Delta E^2_4 E_6 = q - 48 q^2 - 195804 q^3  + \ldots $
be the unique normalized cuspidal Hecke eigenform for~$\SL_2(\Z)$ of
weight~$k=26$.  Let~$p=107$, and
~$\rhobar: G_{\Q} \rightarrow \GL_2(\F_{107})$ denote the mod~$107$ Galois
representation associated to~$f$ (in its cohomological
normalization). By~\cite[Cor., p.SwD-31]{SerreAntwerp}, the image
of~$\rhobar$ is exactly~$\GL_2(\F_{107})$ (note
that~$(\F_{107}^{\times})^{25} = \F^{\times}_{107}$). Since
$$a_{107}(f) = 35830422465487817813321292 \equiv -1 \bmod 107,$$ $f$ is
ordinary at~$107$.

Certainly~$(106,25)=1$, so in view of Theorem~\ref{thm: congruence
  for f p and k} %
we only need to check that~$\rhobar_f|_{G_{\Qp}}$ is semisimple.
That this is indeed the case is a consequence of a computation of
Elkies, recorded in~\cite[\S17]{Gross}: the form~$f$ admits a
\emph{companion form} of weight~$p+1-k = 82$, i.e.\ an eigenform~$g$
of level one and weight~$82$ with $\rhobar_f\cong
\varepsilonbar^{-25}\rhobar_g$. The
semisimplicity of~$\rhobar_f|_{G_{\Qp}}$ is an immediate consequence
of the existence of~$g$ (see e.g.\ \cite[Prop.\ 13.8(3)]{Gross}). %
By  
Theorem~\ref{thm: congruence for f p and k} %
we
deduce the existence of the desired automorphic forms~$\pi$ for~$\GL_n/\Q$ %
for~$n=105,106$
respectively. The existence of such~$\pi$ implies the non-vanishing of
the cuspidal cohomology groups (see Remark~\ref{rem:GLn-Matsushima}). %
\end{proof}

   \begin{remark} \label{forcomment} Combining Theorem~\ref{thm: p 107 results} with the
     descent result~\cite[Thm.\ 7.2]{MR2075885}, we see that
     there is a globally generic, non-endoscopic, cuspidal automorphic
   representation for $\Sp_{104}/\Q$ of level one and weight zero. If~$\AAA_g$ is  the moduli space of principally polarized abelian varieties of dimension~$g$, we deduce %
   that~$H^*_{\cusp}(\AAA_{52},\C) \ne 0$.
   However, as
   Olivier Ta\"{\i}bi explained to us, one can construct  cuspidal
   cohomology classes
of~$\AAA_g$ for much smaller~$g$ coming from endoscopic representations, and one can
even arrange that these endoscopic representations are tempered; see
~\cite[\S \ 1.24]{ChevRenard} for a closely related discussion.
\end{remark}

\section{The non-ordinary case}\label{sec: nonordinary} 
We now explain how to improve~$n=105$ to~$n=79$, at the cost of a
slightly more involved construction. The
idea behind the proof is again quite simple: we replace the ordinary eigenform~$f$
in Theorem~\ref{thm: congruence for f p and k} by a non-ordinary
form, where one can hope to use the change of weight results
of~\cite{BLGGT}. It turns out that there is no local obstruction to the existence
of a weight zero lift of (a twist of) $\Sym^{n-1}\rhobar_f$ if $n-1=p-1$
or~$p$. However, in the latter case the global representation
$\Sym^{n-1}\rhobar_f$ is reducible, and we do not know whether to
expect a congruence to exist in level one, while in the former case it has dimension
~$p$, which is excluded by the
hypotheses of~\cite{BLGGT}. Nonetheless, in the case~$n-1=p-1$, we are able
to use a simplified version of the 
arguments  of~\cite{BLGGT}, since we do not need to change
the level and only need to make a relatively simple change of weight,
and indeed our arguments are very close to those of~\cite{blgg}. 
\begin{thm}\label{thm: congruence for f p and k non ordinary}Let~$p>5$ be
  a prime, and let~$f$ be an eigenform of level~$\SL_2(\Z)$ and
  weight~$2\le k <p$, %
  such that:
  \begin{enumerate}
  \item $(k-1,p+1)=1$.
  \item $f$ is non-ordinary at~$p$.
   \end{enumerate}
  Then there exists a self-dual cuspidal automorphic
  representation~$\pi$ for $\GL_{p}/\Q$ of level one and weight zero
  whose mod~$p$ Galois representation
  $\rhobar_{\pi}:G_{\Q}\to\GL_{p}(\Fpbar)$ is isomorphic to
  $\Sym^{p-1}\rhobar_{f}$.  \end{thm}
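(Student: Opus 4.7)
The plan follows the Khare--Wintenberger strategy of Theorem~\ref{thm: congruence for f p and k}, adapted to the non-ordinary setting. Setting $\rhobar := \Sym^{p-1}(\rhobar_f \otimes \varepsilonbar^{(k-2)/2})$, which is orthogonal with multiplier $\varepsilonbar^{1-p}$ and agrees with $\Sym^{p-1}\rhobar_f$ mod $p$ (as $\varepsilonbar^{(p-1)(k-2)/2}$ is trivial, $k$ being even), I would set up the universal $\GO_p$-valued deformation ring $R$ of $\rhobar$ parameterizing crystalline lifts with Hodge--Tate weights $0,1,\ldots,p-1$ and unramified outside~$p$. Passing to an imaginary quadratic~$F$ in which $p$ splits, one works with the $\cG_p$-valued ring $R_F$ surjecting onto $R$, aiming to show $R_F$ is a finite $\cO$-algebra of Krull dimension at least~$1$, so that $R$ has a $\Qpbar$-valued point whose associated representation is automorphic by~\cite[Thm.\ 2.4.1]{BLGGT}.

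The first key step is the local analysis at~$p$. Since $f$ is non-ordinary of weight $k<p$, Fontaine--Laffaille theory gives $\rhobar_f|_{I_p}\cong\omega_2^{k-1}\oplus\omega_2^{p(k-1)}$, so
\[
\Sym^{p-1}\rhobar_f|_{I_p}\cong\bigoplus_{i=0}^{p-1}\omega_2^{(k-1)(p-1)(i+1)}\cong\bigoplus_{j=1}^{p}\chi^{j},
\]
where $\chi:=\omega_2^{(k-1)(p-1)}$ has order exactly $p+1$ by the hypothesis $(k-1,p+1)=1$. A crystalline lift with Hodge--Tate weights $\{0,1,\ldots,p-1\}$ can be constructed as a direct sum of $(p-1)/2$ irreducible $2$-dimensional crystalline pieces with HT weight pairs $\{a,p-1-a\}$ for $0\le a<(p-1)/2$ (whose Frobenius-conjugate pairs of tame inertia characters match $\{\chi^{j},\chi^{p+1-j}\}$ for appropriate $j$), together with one $1$-dimensional crystalline character of HT weight $(p-1)/2$ matching $\chi^{(p+1)/2}$ (the unique $2$-torsion character among the $\chi^{j}$, since $k-1$ is odd). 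Bellovin's dimension bound, the cohomological vanishing $H^{0}(\Q,(\mathfrak{so}_p)^{*}(1))=0$, and adequacy of $\rhobar(G_{\Q(\zeta_p)})$ (via Lemma~\ref{lem: adequacy} with $n=p$) all carry over from Theorem~\ref{thm: congruence for f p and k}.

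The main obstacle is the finiteness of $R_F$, since the dimension $p$ of $\rhobar$ lies outside the range of~\cite[Thm.\ 10.1]{jack} or~\cite[Thm.\ 2.4.1]{BLGGT}. The proposed remedy is the Harris tensor product trick: tensor $\rhobar$ with an auxiliary Galois representation $\sigma$ (typically arising from automorphic induction of a Hecke character over a cyclic extension) chosen so that $\rhobar\otimes\sigma$ has adequate residual image, admissible dimension, and satisfies residual automorphy (combining Newton--Thorne's symmetric power automorphy for $\rhobar_f$ with automorphic induction of $\sigma$). Applying the relevant automorphy lifting theorem to $\rhobar\otimes\sigma$ and descending along the cyclic extension should yield automorphy of a $\Qpbar$-valued lift $\rho$ of $\rhobar$. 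The execution follows the change-of-weight framework of~\cite{blgg}, simplified by the absence of any level change, and adapts techniques from~\cite{BLGGT}. Local-global compatibility at~$p$ then ensures that the resulting self-dual cuspidal $\pi$ has level one and weight zero.
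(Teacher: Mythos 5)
Your overall plan coincides with the paper's (the local identification of $\rhobar|_{G_{\Qp}}$ with the reduction of $\rho_{p,1}:=\Sym^{p-1}\Ind_{G_{\Q_{p^2}}}^{G_{\Qp}}\varepsilon_2^{-1}$, Bellovin's lower bound, the observation that $n=p$ is outside the range of \cite{jack,BLGGT}, and the Harris tensor trick via induction from a cyclic CM extension), but the step that actually changes the weight is missing, and your literal formulation of it would fail. Tensoring with a \emph{single} auxiliary $\sigma=\Ind_{G_M}^{G_F}\theta$ cannot work: the residual automorphy you would feed into the lifting theorem comes from $\Sym^{p-1}f$, whose Hodge--Tate weights are $0,k-1,\dots,(p-1)(k-1)$, while the deformations you want to produce have weights $0,1,\dots,p-1$; after tensoring both sides with the same $\Ind_{G_M}^{G_F}\theta$ the two sides do not have the same Hodge type (and on the weight-zero side the weights $\{a+b\colon 0\le a\le k-2,\ 0\le b\le p-1\}$ are not even regular), so there is no common deformation problem of fixed regular weight to which an automorphy lifting theorem applies. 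The paper's mechanism is to choose a cyclic CM extension $M/F$ of degree $k-1$ and \emph{two} characters $\theta,\theta'$ with $\overline{\theta}=\overline{\theta}'$ but with $\Ind_{G_M}^{G_F}\theta$ and $\Ind_{G_M}^{G_F}\theta'$ crystalline of Hodge--Tate weights $\{0,\dots,k-2\}$ and $\{0,p,\dots,p(k-2)\}$ respectively; then $(\Ind_{G_M}^{G_F}\theta)\otimes\Sym^{p-1}\rho_f$ (the automorphic point) and $(\Ind_{G_M}^{G_F}\theta')\otimes(\text{weight-zero lift})$ both have regular weights $0,\dots,(k-1)p-1$, and the identity $\rho_{k-1,p}\otimes\rho_{p,1}\cong\rho_{(k-1)p,1}\cong\rho_{p,k-1}\otimes\rho_{k-1,1}$ puts them on a common component of the local crystalline deformation ring, which is exactly what lets one apply \cite[Prop.\ 7.2, Thm.\ 7.1]{MR3598803} and \cite[Thm.\ 10.1]{jack} to get finiteness of $R_F$ and automorphy, followed by solvable descent and untwisting (\cite[Lem.\ 2.2.1, 2.2.2, 2.2.4]{BLGGT}). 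For the same reason your ring $R$ must impose the condition of lying on the component of $\rho_{p,1}$, not merely ``crystalline of weights $0,\dots,p-1$''; and your first-paragraph appeal to \cite[Thm.\ 2.4.1]{BLGGT} for the $p$-dimensional $\rho$ directly is inadmissible since the dimension equals $p$. (The paper also takes $F$ to be CM quartic with $p$ inert in the real quadratic $F^+$ and split in $F/F^+$, rather than imaginary quadratic with $p$ split, which makes the ``connects to'' verifications at $p$ tractable.)

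Two further gaps. First, the theorem has no big-image hypothesis, so before you may cite Lemma~\ref{lem: adequacy} you must prove $\SL_2(\F_p)\subseteq\rhobar_f(G_{\Q})$; the paper deduces this from $(k-1,p+1)=1$: the projective image contains a cyclic subgroup of order $p+1>5$ (excluding exceptional image), irreducibility of $\rhobar_f$ follows from irreducibility of $\rhobar_f|_{G_{\Qp}}$, and a dihedral image would force induction from $\Q(\sqrt{p^*})$, hence invariance of $\rhobar_f|_{I_p}\cong\omega_2^{k-1}\oplus\omega_2^{p(k-1)}$ under twisting by $\omega_2^{(p^2-1)/2}$, which forces $k\equiv(p+3)/2\pmod{p+1}$ and contradicts $(k-1,p+1)=1$. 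Second, the adequacy that the lifting theorem actually needs is that of the induced representation $\sbar=\Ind_{G_M}^{G_F}(\overline{\theta}\otimes\rhobar|_{G_F})$ over $F(\zeta_p)$, in dimension $(k-1)p$ divisible by $p$; this requires the relaxed notion of adequacy (only $H^1(H,\mathrm{ad})=0$) together with \cite[Lem.\ A.3.1]{blggu2}, and does not follow from adequacy of $\rhobar(G_{\Q(\zeta_p)})$ alone, which is all your sketch invokes.
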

  \begin{proof}Where possible, we follow the proof of Theorem~\ref{thm: congruence for
      f p and k}. We begin by showing that~$\rhobar_f$ has image containing~$\SL_2(\F_p)$. Since~$(k-1,p+1) = 1$, the projective image
 of $\rhobar_f(G_{I_{\Qp}})$  contains a cyclic subgroup of order~$p+1 > 5$, so~$\rhobar_f$ does not have exceptional image
 (that is, projective image~$A_4$, $S_4$, or~$A_5$).
 Since~$\rhobar_f|_{G_{\Qp}}$ is absolutely irreducible, so is~$\rhobar_f$. Hence it remains to
   rule out the possibility that~$\rhobar_f$ has dihedral image. If
   this were the case, then since
   it is unramified outside~$p$, it would have to be induced
   from~$\Q(\sqrt{p^*})$ where~$p^* = (-1)^{(p-1)/2}p$. 
   But this would imply that~$\rhobar_f |_{G_{\Q_{p}}}$ is induced
   from~$\Q_p(\sqrt{p^*})$, which would in turn imply that it is invariant under twisting by~$\varepsilon^{(p-1)/2}=\omega^{(p^2-1)/2}_2$.
   Since~$\rhobar_f |_{I_p} \simeq \omega^{k-1}_2 \oplus \omega^{p(k-1)}_2$, this can only
   happen if~$k \equiv (p+3)/2 \pmod{p+1}$, contradicting the assumption that~$(k-1,p+1) = 1$.
    
      Let~$\F/\Fp$ be a finite extension such that
  $\rhobar_f(G_{\Q})\subseteq \GL_2(\F)$, and write
  $\rhobar:=\Sym^{p-1}\rhobar_{f}$, so that
  $\rhobar:G_{\Q}\to\GO_{p}(\F)$ has multiplier 
  ~$\varepsilonbar^{1-p}=1$, and~$\rhobar(G_{\Q(\zeta_p)})$ is adequate by Lemma~\ref{lem: adequacy}.

 Let $\varepsilon_2, \varepsilon_2' : G_{\Q_{p^2}} \to
 \overline{\Z}_p^\times$ be the two Lubin--Tate characters trivial on
 $\operatorname{Art}_{\Q_{p^2}}(p)$, and write~$\omega_2$ for the reduction
 modulo~$p$ of~$\varepsilon_2$. For any $n,m\ge 1$ we let $\rho_{n,m}$
 denote the
 representation
  \[\Sym^{n-1}\Ind_{G_{\Q_{p^2}}}^{G_{\Qp}}\varepsilon_2^{-m}:G_{\Qp}\to\GL_{n}(\overline{\Z}_p),\]which
 is crystalline with Hodge--Tate weights $0,m,\dots,(n-1)m$.

We have  \[\rhobar_{p,m}\cong
 \varepsilonbar^{m(p-1)/2}\oplus\bigoplus_{i=1}^{(p-1)/2}\Ind_{G_{\Q_{p^2}}}^{G_{\Qp}}\omega_2^{m(1-p)i}.\]
Suppose that~$(m,p+1)=1$ (so that in particular~$m$ is odd). Then~$\omega^{m(1-p)}_2$ has order exactly~$p+1$,
and the~$\Gal(\Q_{p^2}/\Q_p)$-conjugate of~$\omega_2^{m(1-p)i}$ is~$\omega_2^{-m(1-p)i}$.
It follows, under this assumption on~$m$, that~$\rhobar_{p,m}$ does not depend on~$m$, so
 there is an isomorphism of orthogonal representations
 $\rhobar_{p,m}\cong \rhobar_{p,1}$. Our assumptions
 that~$f$ is non-ordinary, that~$k<p$, and that~$(k-1,p+1)=1$ therefore  imply that
  $\rhobar|_{G_{\Qp}}\cong \rhobar_{p,1}$, which admits the weight~$0$
  crystalline lift~$\rho_{p,1}$.

  Write~$R$ for the complete local Noetherian $\cO$-algebra which is
  the universal deformation ring for~$\GO_p$-valued deformations
  of~$\rhobar$ which have multiplier~$\varepsilon^{1-p}$, are
  unramified outside~$p$, and whose restrictions to~$G_{\Qp}$ are
  crystalline of weight~$0$, and lie on the same component of the
  corresponding local crystalline deformation ring
  as~$\rho_{p,1}$. By~\cite[Prop.\ 4.2.6]{Bellovin}, every irreducible
  component of~ $R$ has Krull dimension at least~$1$ (the verification
  that ~$H^0(\Q,\mathfrak{so}^*_p(1)) = 0$ is exactly as in the proof
  of Theorem~\ref{thm: congruence for f p and k}).

  Let~$F^+/\Q$ and $F/F^+$ be quadratic extensions, with~$F^+$ real
  quadratic and~$F$ imaginary
  CM, such that~$p$ is inert
  in~$F^+$, the place of~$F^+$ above~$p$ splits in~$F$, and~$F/\Q$ is disjoint from
  $(\Qbar)^{\ker\rhobar}(\zeta_{p})$. As in the proof of~\cite[Prop.\
  4.1.1]{BLGGT}, using~\cite[Cor.\ A.2.3, Lem.\ A.2.5]{BLGGT} we can
  find a cyclic CM extension~$M/F$ of degree~$(k-1)$, and characters
  $\theta,\theta':G_{M}\to\overline{\Q}_p^{\times}$ with
  $\overline{\theta}=\overline{\theta}'$, such that the
  representation $\sbar:=\Ind_{G_M}^{G_F}(\overline{\theta}\otimes\rhobar|_{G_F})$
  is absolutely irreducible. Furthermore we choose~$\theta,\theta'$ so
  that~$\theta\theta^c=\varepsilon^{2-k}$,
  $\theta'(\theta')^c=\varepsilon^{p(2-k)}$,
  and~$\Ind_{G_M}^{G_F}\theta,\Ind_{G_M}^{G_F}\theta'$ are both  
  crystalline, with all sets of labelled Hodge--Tate weights
  respectively equal to~$\{0,1,\dots,k-2\}$,
  $\{0,p,\dots,p(k-2)\}$.

By construction, after possibly replacing~$F^{+}$ by a solvable extension, we can and
  do assume that for each place~$v|p$ of~$F$, we
  have \[\bigl(\Ind_{G_M}^{G_F}\theta\bigr)|_{G_{F_v}}\sim
    \rho_{k-1,1}|_{G_{F_v}},\  \bigl(\Ind_{G_M}^{G_F}\theta'\bigr)|_{G_{F_v}}\sim
    \rho_{k-1,p}|_{G_{F_v}},\]  where $\sim$ is the notion ``connects
  to'' of~\cite[\S1.4]{BLGGT}.
   We
  let $R_F$ be the complete local Noetherian $\cO$-algebra which is
  the universal deformation ring for~$\cG_{(k-1)p}$-valued
  deformations of (the usual extension of)~$\sbar$, which have
  multiplier~$\varepsilon^{1-(k-1)p}\delta_{F/F^+}$, are unramified
  outside~$p$, and whose restrictions to the places above~$p$ are
  crystalline with Hodge--Tate weights $0,1,\dots,(k-1)p-1$, and lie
  on the same irreducible components of the local crystalline
  deformation rings as \[
    (\rho_{k-1,p}\otimes\rho_{p,1})|_{G_{F_v}}\cong
    \rho_{(k-1)p,1}|_{G_{F_v}}\cong (\rho_{p,k-1}\otimes\rho_{k-1,1})|_{G_{F_v}}.\]
     We have a finite map $R_F\to R$, taking a lifting~$\rho$
     of~$\rhobar$ to $\Ind_{G_M}^{G_F}(\theta\otimes\rho|_{G_F})$.

     We
  claim that the conclusions of ~\cite[Prop.\ 7.2]{MR3598803} apply in
  our setting, so that 
  $R_F$ is a finite $\cO$-algebra by~\cite[Thm.\ 10.1]{jack}. Admitting this claim for a moment,
  we deduce that~$R$ is a
  finite $\cO$-algebra, and since it has dimension at least~$1$, it
  has a~$\Qpbar$-valued point. The corresponding lift
  $\rho:G_{\Q}\to \GO_p(\Qpbar)$ of~$\rhobar$ is unramified
  outside~$p$, has multiplier~$\varepsilon^{1-p}$, and is crystalline
  with Hodge--Tate weights $0,1,\dots,p-1$. By~ \cite[Thm.\ 7.1]{MR3598803},
  $\Ind_{G_M}^{G_F}(\theta\otimes\rho|_{G_F})$ is automorphic,
  so~$\rho$ itself is automorphic by~\cite[Lem.\ 2.2.1, 2.2.2,
  2.2.4]{BLGGT}.

  It remains to show that we can apply \cite[Thm.\ 7.1, Prop.\
  7.2]{MR3598803}. To this end, we note that the notion of adequacy in
  ~\cite[Defn.\ 2.20]{MR3598803} can be relaxed to assume only
  that~$H^1(H,\operatorname{ad})=0$, rather than assuming that~$H^1(H,\operatorname{ad}_0)=0$;
  more precisely, the proof of~\cite[Prop.\ 2.21]{MR3598803} only uses
  this weaker assumption. Now, since $\rhobar(G_{\Q(\zeta_p)})$ is
  adequate, and since $p\nmid (k-1)$, we see that
  $\sbar(G_{F(\zeta_p)})$ is adequate by~\cite[Lem.\ A.3.1]{blggu2}
  (whose proof goes over unchanged in this setting), as required.  
   \end{proof}

 \begin{cor}
   \label{cor:79}There exists a self-dual cuspidal automorphic
  representation~$\pi$ for $\GL_{79}/\Q$ of level one and weight zero.
 \end{cor}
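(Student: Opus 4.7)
The plan is to deduce this corollary as an immediate consequence of Theorem~\ref{thm: congruence for f p and k non ordinary} applied with $p = 79$. Under this choice we have $p > 5$, and the theorem produces the desired self-dual cuspidal automorphic representation of $\GL_{79}/\Q$ of level one and weight zero as soon as we can exhibit a level one eigenform $f$ of weight $k$ with $2 \le k < 79$ such that $(k-1, p+1) = (k-1, 80) = 1$ and $f$ is non-ordinary at $79$, i.e.\ $a_{79}(f) \equiv 0 \pmod{79}$.

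First I would unpack the arithmetic condition $(k-1, 80) = 1$: writing $80 = 2^4 \cdot 5$, this forces $k$ to be even and $k \not\equiv 1 \pmod 5$. Since $S_k(\SL_2(\Z)) = 0$ for even $k < 12$, the admissible weights form the (sizable) set of even integers $k \in [12, 79)$ with $k \not\in \{16, 26, 36, 46, 56, 66, 76\}$.

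Next I would exhibit a specific eigenform satisfying both conditions. For each eligible weight, one computes the characteristic polynomial of $T_{79}$ on $S_k(\SL_2(\Z))$ (for instance via modular symbols, the Eichler--Selberg trace formula, or tabulated data from the LMFDB) and checks whether $79$ divides a $T_{79}$-eigenvalue of some newform. Heuristically one expects roughly a $1/79$ proportion of eigenforms to be non-ordinary at $79$, and the available range of weights is more than large enough that a suitable $f$ is readily identified.

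Given such an $f$, Theorem~\ref{thm: congruence for f p and k non ordinary} produces a self-dual cuspidal automorphic representation $\pi$ for $\GL_{79}/\Q$ of level one and weight zero with $\rhobar_\pi \cong \Sym^{78} \rhobar_f$, which is exactly the statement of the corollary. The only real obstacle is the numerical verification of the existence of $f$; this is a finite, well-defined search, and in practice explicit computation produces a suitable $f$ quickly. If it happened that every eigenform in one admissible weight were ordinary at $79$, one would simply pass to the next eligible weight.
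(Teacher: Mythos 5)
Your strategy is exactly the paper's: apply Theorem~\ref{thm: congruence for f p and k non ordinary} with $p=79$ to a level one eigenform $f$ of weight $k<79$ that is non-ordinary at $79$ and satisfies $(k-1,80)=1$, and your analysis of the admissible weights is correct. The one real shortfall is that you never actually exhibit such an $f$: you reduce the corollary to a finite search and then appeal to a $1/79$ heuristic and the expectation that "explicit computation produces a suitable $f$ quickly." That heuristic is not a proof, and the existence of such a form is precisely the entire content of the corollary's proof; indeed the paper's Remark~\ref{rem: expect positive density irreducible} stresses that nobody knows how to prove such forms exist for a positive density of primes, and that at the smaller prime $p=59$ the only available non-ordinary weight ($k=16$) fails the coprimality condition, so the existence at a given $p$ is a genuinely contingent numerical fact. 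The paper closes the gap by citing the known computation (\cite{MR1824885,Ghitza}) that the weight $k=38$ eigenform of level one is non-ordinary at $p=79$, together with the observation $(37,80)=1$. To complete your argument you must either quote such a reference or record the explicit verification (e.g.\ $a_{79}(f)\equiv 0 \bmod 79$ for a named eigenform in an admissible weight); your fallback of "pass to the next eligible weight" does not by itself guarantee termination with a success.
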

\begin{proof} There exists (\cite{MR1824885,Ghitza})  a modular eigenform~$f$ of level~$1$ and weight~$k=38$
   which is non-ordinary at~$p=79$, and~$(37,79+1) = 1$.
 \end{proof}

  \begin{remark}\label{rem: expect positive density irreducible} The prime~$p=79$ is the second smallest prime for which there exists a non-ordinary form~$f$ of weight~$k < p$.
The smallest is~$p=59$ for which there exists a non-ordinary eigenform of weight~$k=16$. However, $(k-1,p+1) \ne 1$ in this case,
so the construction fails in a number of
places. Following~\cite{Ghitza}, we see 
that there exist modular forms~$f$
satisfying the hypotheses of 
Theorem~\ref{thm: congruence for f p and k non ordinary}
for~$p=79,151,173,193,\ldots$
and 
modular forms satisfying the
hypotheses of Theorem~\ref{thm: congruence for f p and k}  for~$p = 107, 139, 151, 173, 179, \ldots$. We expect (but have no idea how to prove)
 that (in either case) there exist such~$f$ for a positive density  of primes~$p$. %
  \end{remark}

\bibliographystyle{amsalpha}
\bibliography{Weightzero}

\end{document}